\theoremstyle{thmstyleone}%
\newtheorem{theorem}{Theorem}
\newtheorem{proposition}[theorem]{Proposition}%
\theoremstyle{thmstyletwo}%
\theoremstyle{thmstylethree}%
\newtheorem{corollary}{Corollary}
\def \e {{\rm e}}
\def \R{{\mathbb{R}}}
\def \S{{\mathbb{S}}}
\newcommand{\ds}{\displaystyle}
\newcommand{\be}{\begin{equation}}
\newcommand{\ee}{\end{equation}}
\def\p{\partial}
\def\n{\nabla}
\def\nablav{\bm\nabla}
\def\d{\hbox{d}}
\def\vx{{\bf x}}
\def\vu{{\bf u}}
\def\omegav{{\bm\omega}}
\def\vn{{\bf n}}
\def\vx{{\bf x}}
\def\vu{{\bf u}}
\def\vom{{\bm\omega}}
\def\vn{{\bf n}}
\def \SS{{\mathbb{S}^2}}
\def\eps{{\epsilon}}
\def\d{{\partial}}
\def\dd{\mathrm{d}}
\def\bR{\mathbf{R}}
\def \lA {\big\langle \! \! \big\langle}
\def \rA {\big\rangle \! \! \big\rangle}
\begin{document}

\title[Radiative Transfer in a Fluid]{Radiative Transfer in a Fluid}

\author[1]{\fnm{Fran\c cois} \sur{Golse}}\email{francois.golse@polytechnique.edu}

\author*[2]{\fnm{Olivier} \sur{Pironneau}}\email{olivier.pironneau@gmail.com}
\equalcont{\color{red}Submitted to Springer's RACSAM in December 2021\color{black}}

\affil[1]{\orgdiv{CMLS}, \orgname{Ecole polytechnique},\orgaddress{\street{~}\city{Palaiseau}, \postcode{91128}, \state{Cedex}, \country{France}}}

\affil*[2]{\orgdiv{LJLL}, \orgname{Sorbonne Universit\'e}, \orgaddress{\street{~}\city{Paris}, \postcode{75253}, \state{Cedex 5}, \country{France}}}


\abstract{We study the Radiative Transfer equations coupled with the time dependent temperature equation of a  fluid:  existence, uniqueness, a maximum principle are established. A short numerical section illustrates the pros and cons of the method.}

\keywords{Radiative transfer, Temperature equation, Integral equation, Numerical analysis}



\maketitle

\section*{Introduction}\label{sec1}

In fluid mechanics, \emph{Radiative Transfer} is an important subfield of  \emph{Heat Transfer} with many applications to combustion, micro-wave ovens and climate models.

For the physics of radiative transfer for the atmosphere the reader is sent to \cite{GOO}, \cite{BOH},  to the numerically oriented \cite{ZDU} and to the two mathematically oriented \cite{CHA} and \cite{FOW}.

When Planck's theory of black bodies is used the radiations have a continuum of frequencies governed by the temperature of the emitting body. 

Even when the interactions with the fluid medium are neglected, the radiative transfer equations have 5 spatial dimensions. Hence the problem is numerically quite difficult. 
The stratified approximation is used when the radiation source is far, typically, the Sun.  It is a two dimensional model with one spatial and one angular dimension to which the authors have contributed recently: see  \cite{CBOP} for stratified  radiative transfer alone and \cite{OP}, \cite{FGOP}, \cite{FGOPSinum} for stratified  radiative transfer coupled with the temperature equation in the stationary case.  

In this article the complete 5 dimensional radiative transfer model is studied when coupled with the time dependent temperature equation in the fluid. 

Existence and uniqueness of a solution is well known when the physically constants do not depend on the frequency of the radiating source, the so called grey model (see \cite{BAR}) and \cite{G87}).  In the non-grey case, some results have been obtained by \cite{MER}, \cite{GOL-PER}, et al. The present article extends these studies done in the eighties.

The  radiative transfer system coupled to the Navier-Stokes equations has been studied by \cite{POM} and \cite{MAS} at least. In the later an existence theorem is given when the coefficients depend on the spatial variables but not on the frequencies of the source.

The paper begins with a statement of  the radiative transfer equations in Section \ref{sec:2}. 
In Section \ref{sec:3} an existence result is given. 
In Section \ref{sec:4}, uniqueness is shown.  The proof is complex and relies on an argument given by \cite{MER} and \cite{CRA}. A maximum principle is also shown.
Finally in Section \ref{sec:5} a numerical example is given.

\section{Fundamental equations and  approximations}\label{sec:2}

To find the temperature $T$ in an incompressible fluid exposed to electromagnetic waves it is necessary to solve the Navier-Stokes equations coupled with the Radiative Transfer equations, as explained in \cite{POM}.  It is a complex partial differential system formulated in terms of a time dependent temperature field $T(\vx,t)$ function of the position $\vx$ in the physical domain $\Omega$ and a light intensity field ${I}_\nu(\vx,\omegav,t)$ 
of frequency $\nu$  in each direction $\omegav$:

Given ${I}_\nu,T,\vu,\rho$ at time zero, find ${I}_\nu,T, \vu,p,\rho$, such that for all
$\{\vx,\omegav,t,\nu\}\in\Omega\times\S_2\times(0,\bar T)\times\R^+$,
\begin{equation}
\begin{aligned}
\label{onea}
&\frac1c\p_t {I}_\nu + \omegav\cdot\nablav {I}_\nu+\rho\bar\kappa_\nu a_\nu\left[{I}_\nu-{\frac{1}{4\pi}\int_{\S^2}} p(\omegav,\omegav'){I}_\nu(\omegav'){d}\omega'\right]
\\&
\hskip6cm = \rho\bar\kappa_\nu(1-a_\nu) [B_\nu(T)-{I}_\nu],
\\ 
&\rho c_V(\p_tT+\vu\cdot\n T) -\nabla\cdot(\rho c_P \kappa_T\nabla T )
\\ &
\hskip2cm +\frac{1}c\int_0^\infty\int_{\S^2}{I}_\nu d\mu d\nu 
+ \nablav\cdot\int_0^\infty{\int_{\S^2}} {I}_\nu(\omegav')\omegav {d}\omega {d}\nu =0
\\ 
&\p_t\vu+\vu\cdot\n\vu-\frac{\mu_F}\rho\Delta\vu + \frac1\rho\n p={\bf g},\quad \n\cdot\vu=0,
\quad 
\p_t\rho+\n\cdot(\rho\vu)=0,
\end{aligned}
\end{equation}
where $\nabla,\Delta$ are with respect to $\vx$, 
$\ds B_\nu(T)=\frac{2 \hbar \nu^3}{c^2[{\rm e}^\frac{\hbar\nu}{k T}-1]}$, is the Planck function,
  $\hbar,c,k$ are the Planck constant, the speed of light in the medium and  
the Boltzmann constant. The density of the medium is $\rho$, the pressure is $p$; $c_P,c_V$ are the compressibility of the fluid at constant pressure or volume; in large area these may be altitude/depth dependent. 
The absorption coefficient $\kappa_\nu:=\rho\bar\kappa_\nu$ comes from computations in atomic physics, but for our purpose it is seen as the percentage of light absorbed per unit length. The scattering albedo is  $a_\nu\in(0,1)$ and $\frac1{4\pi}p(\omegav,\omegav')$ is the probability that a ray in direction $\omegav'$ scatters in direction $\omegav$. The constants $\kappa_T$ and $\mu_F$ are the thermal and molecular diffusions; ${\bf g}$ is the gravity. 

Existence of solution for the fluid part of (\ref{onea}) has been established by \cite{PLL}.

\subsection{The Mathematical Problem}

Denote the angular average radiative intensity by 
$
J_\nu(t,\vx) = \frac1{4\pi}\int_{\S^2} {I}_\nu(\omegav ){d}\omega.
$
If $\frac1c$ is neglected in \eqref{onea}, the following holds:
\begin{equation}\label{oneb}
\nablav\cdot\int_0^\infty{\int_{\S^2}} {I}_\nu(\omegav')\omegav {d}\omega {d}\nu
 =  4\pi\int_0^\infty \rho \kappa_\nu(1-a_\nu)\left( B_\nu(T)-J_\nu\right) {d}\nu
\,.
\end{equation}
Consequently we are led to study the well-posedness of the following system for $I_\nu,J_\nu,T$:
\begin{equation}\label{RTHeat}
\left\{
\begin{aligned}
{}&\vom\cdot\nabla I_\nu+\kappa_\nu I_\nu=\kappa_\nu(1-a_\nu)B_\nu(T)+\kappa_\nu a_\nu J_\nu\,,\quad J_\nu:=\int_{\SS}I_\nu\tfrac{\dd\vom}{4\pi}\,,
\\
&\d_tT+\vu\cdot\nabla T-\lambda\Delta T=\int_0^\infty\kappa_\nu(1-a_\nu)(J_\nu-B_\nu(T))\dd\nu\,,
\\
&I_\nu(\vx,\vom)\!=\!Q_\nu(\vx,\vom)\,,\quad\vom\cdot\vn<0\,,\,\,\vx\in\d\Omega\,,\qquad\qquad\frac{\d T}{\d n}\Big\vert _{\d\Omega}=0\,,
\\
&T\big\vert _{t=0}=T_{in}\,.
\end{aligned}
\right.
\end{equation}
Here $\Omega$ is assumed to be a bounded open subset of $\bR^3$ with $C^1$ boundary, and we denote by $\vn$ the outward unit normal field on $\d\Omega$. We further assume that $\nu\mapsto\kappa_\nu$ and $\nu\mapsto a_\nu$ are measurable functions
satisfying
$$
0\le\kappa_m\le\kappa_\nu\le\kappa_M\,,\qquad 0\le a_\nu\le a_M<1\,,\qquad\nu>0\,,~a.e.\, ,
$$
for some positive constants $a_M$ and $\kappa_m<\kappa_M$. Finally, we assume that the fluid velocity field $(t,\vx)\mapsto\vu(t,\vx)$ is smooth on $[0,+\infty)\times\overline{\Omega}$ and satisfies
$$
\nabla\cdot\vu(t,\vx)=0\text{ for }\vx\in\Omega\,,\qquad\vu(t,\vx)=0\text{ for }\vx\in\d\Omega\,,\qquad t\ge 0\,.
$$

\section{Existence}\label{sec:3}

Given a passive parameter $t$, consider the auxiliary problem
$$
\left\{
\begin{aligned}
{}&\vom\cdot\nabla I_\nu(t,\vx,\vom)=\kappa_\nu(S_\nu(t,\vx)-I_\nu(t,\vx,\vom))\,,&&\qquad \vx\in\Omega\,,\,\,\vert \vom\vert =1\,,
\\
&I_\nu(t,\vx,\vom)=Q_\nu(\vx,\vom)\,,&&\qquad\vom\cdot\vn<0\,,
\end{aligned}
\right.
$$
where the source  $S_\nu$ is isotropic, i.e. not a function of $\omegav$. Define the exit time
$$
\tau_{\vx,\vom}=\sup\{s>0\,\text{ s.t. }\vx-s\vom\in\Omega\}\,.
$$
By the method of characteristics
$$
I_\nu(t,\vx,\vom)=\mathbf 1_{\tau_{\vx,\vom}<+\infty}Q_\nu(\vx-\tau_{\vx,\vom}\vom)\e^{-\kappa_\nu\tau_{\vx,\vom}}+\int_0^{\tau_{\vx,\vom}}\e^{-\kappa_\nu s}\kappa_\nu S_\nu(t,\vx-s\vom)\dd s\,.
$$
Averaging in $\vom$, one finds
\begin{equation}\label{JofS}
\begin{aligned}
J_\nu(t,\vx)=\mathcal J[S_\nu](t,\vx):=&\tfrac1{4\pi}\int_\SS\mathbf 1_{\tau_{\vx,\vom}<+\infty}Q_\nu(\vx-\tau_{\vx,\vom}\vom)\e^{-\kappa_\nu\tau_{\vx,\vom}}\dd\vom
\\
&+\tfrac1{4\pi}\int_\SS\int_0^{\tau_{\vx,\vom}}\e^{-\kappa_\nu s}\kappa_\nu S_\nu(t,\vx-s\vom)\dd s\dd\vom\,.
\end{aligned}
\end{equation}
Since $\kappa_\nu>0$, the functional $\mathcal J$ satisfies the following monotonicity property:
$$
\begin{aligned}
S_\nu(t,\vx)\le S'_\nu(t,\vx)\text{ for a.e. }\vx\in\Omega\text{ and }t>0
\\
\implies\mathcal J[S_\nu](t,\vx)\le\mathcal J[S'_\nu](t,\vx)\text{ for a.e. }\vx\in\Omega\text{ and }t>0&\,.
\end{aligned}
$$
In particular, 
$$
\begin{aligned}
0\le Q_\nu(\vx,\vom)\,,\,\,S_\nu(t,\vx)\le B_\nu(T_M)\,,\quad x\in\Omega\,,\,\,\vert \vom\vert =1\,,\,\,\nu,t>0
\\
\implies 0\le\mathcal J[S_\nu](t,\vx)\le B_\nu(T_M)\,,\quad x\in\Omega\,,\,\,\nu,t>0&\,.
\end{aligned}
$$
That $\mathcal J[S_\nu]\ge 0$ is obvious. As for the upper bound, observe that
$$
\begin{aligned}
\mathcal J[B_\nu(T_M)]=&\tfrac1{4\pi}B_\nu(T_M)\int_\SS \e^{-\kappa_\nu\tau_{\vx,\vom}}\dd\vom+\tfrac1{4\pi}B_\nu(T_M)\int_\SS\int_0^{\tau_{\vx,\vom}}\e^{-\kappa_\nu s}\kappa_\nu \dd s\dd\vom
\\
=&\tfrac1{4\pi}B_\nu(T_M)\left[\int_\SS \e^{-\kappa_\nu\tau_{\vx,\vom}}\dd\vom+\int_\SS(1-\e^{-\kappa_\nu\tau_{\vx,\vom}})\dd\vom\right]
=B_\nu(T_M)\,,
\end{aligned}
$$
so that the desired upper bound follows from the monotonicity of $\mathcal J$.

In order to solve the system \eqref{RTHeat}, we consider the iterative scheme detailed in Algorithm \ref{algo:1}, where we have assumed that
$$
0\le T_{in}(\vx)\le T_M\,,\quad 0\le Q_\nu(\vx,\vom)\le B_\nu(T_M)\,,\quad x\in\Omega\,,\,\,\vert \vom\vert =1\,,\,\,\nu>0\,.
$$
\noindent
\begin{algorithm}
\caption{to solve \eqref{RTHeat}.}
\label{algo:1}
\begin{enumerate}
\item {Start from $T^0\equiv 0$ and $J^0_\nu=\mathcal J[0]$;}
\item \FOR{$n=0,1,\dots,N-1$  }
\begin{enumerate}
\item for all $\nu\in(0,\infty)$ and all $\tau\in(0,Z)$,  by
knowing $T^n\equiv T^n(t,\vx)$ and $J^n_\nu\equiv J^n_\nu(t,\vx)$, define with \eqref{JofS}
$$
J^{n+1}=\mathcal J[a_\nu J^n_\nu+(1-a_\nu)B_\nu(T^n)]\,;
$$
\item Define $T^{n+1}$ to be the solution of the semilinear drift-diffusion equation
$$
\left\{
\begin{aligned}
&{}\d_tT^{n+1}+\vu\cdot\nabla T^{n+1}-\lambda\Delta T^{n+1}+\mathcal B(T^{n+1})=\int_0^\infty\kappa_\nu(1-a_\nu)J^{n+1}_\nu\dd\nu\,,\\
&T^{n+1}\big\vert _{t=0}=T_{in}\,,\qquad\qquad\frac{\d T^{n+1}}{\d n}\Big\vert _{\d\Omega}=0\,,\quad\vx\in\Omega\,,\,\,t>0\,,
\end{aligned}
\right.
$$
where
$\ds
\mathcal B(T):=\int_0^\infty\kappa_\nu(1-a_\nu)B_\nu(\min(T_+,T_M))\dd\nu\,.
$
\end{enumerate}

\end{enumerate}
\end{algorithm}

Applying Theorem 10.9 in \cite{Brezis} (see also section 4.7.2 in \cite{LionsMage1}) shows that, for each $q\in L^2(0,T;H^{-1}(\Omega))$, there exists a unique solution to the convection-diffusion problem
$$
\left\{
\begin{aligned}
&{}\d_t\theta+\vu\cdot\nabla \theta-\lambda\Delta \theta=q\,,\quad\vx\in\Omega\,,\,\,t>0\,,
\\
&\theta\big\vert _{t=0}=T_{in}\,,\qquad\qquad\frac{\d\theta}{\d n}\Big\vert _{\d\Omega}=0\,,
\end{aligned}
\right.
$$
of the form
$$
\theta(t,\cdot)=\Sigma(t,0)T_{in}+\int_0^t\Sigma(t,s)q(s,\cdot)\dd s\in L^2(0,T;H^1(\Omega))\cap C(0,T;L^2(\Omega))\,.
$$
With 
$$
q:=\int_0^\infty\kappa_\nu(1-a_\nu)J^{n+1}_\nu\dd\nu-\mathcal B(T^{n+1})\,,
$$
we see that $T^{n+1}$ is a fixed point of the map $\mathcal F$ defined by
$$
\mathcal F(\theta)(t,\cdot)=\Sigma(t,0)T_{in}+\int_0^t\Sigma(t,s)\left(\int_0^\infty\kappa_\nu(1-a_\nu)J^{n+1}_\nu(s,\cdot)\dd\nu-\mathcal B(\theta(s,\cdot))\right)\dd s\,.
$$
Observe that $\mathcal B$ is Lipschitz continuous with
$$
\text{Lip}(\mathcal B)\le\kappa_M(1-a_M)\sup_{0\le\theta\le T_M}\int_0^\infty B'_\nu(\theta)d\nu=4\kappa_M(1-a_M)T_M^3\,,
$$
so that, arguing as in the proof of Theorem 1.2 in chapter 6 of \cite{Pazy} shows that $\mathcal F$ has a unique fixed point. This defines a unique solution $T^{n+1}\in C([0,+\infty);L^2(\Omega))\cap L^2_{loc}(0,\infty;H^1(\Omega))$.

Next, we seek to compare the solutions $T$ and $T'$ of 
$$
\left\{
\begin{aligned}
&{}\d_tT+\vu\cdot\nabla T-\lambda\Delta T+\mathcal B(T)=R\,,\quad\vx\in\Omega\,,\,\,t>0\,,
\\
&T\big\vert _{t=0}=T_{in}\,,\qquad\qquad\frac{\d T}{\d n}\Big\vert _{\d\Omega}=0\,,
\end{aligned}
\right.
$$
and
$$
\left\{
\begin{aligned}
&{}\d_tT'+\vu\cdot\nabla T'-\lambda\Delta T'+\mathcal B(T')=R'\,,\quad\vx\in\Omega\,,\,\,t>0\,,
\\
&T'\big\vert _{t=0}=T'_{in}\,,\qquad\qquad\frac{\d T'}{\d n}\Big\vert _{\d\Omega}=0\,,
\end{aligned}
\right.
$$
under the assumption that $0\le R\le R'$ on $(0,+\infty)\times\Omega$. Proceeding as in the proof of Lemma 6.2 of \cite{FGOPSinum}, we multiply both sides of the equality satisfied by the difference $T-T'$ by $(T-T')_+$:
$$
\begin{aligned}
\d_t\tfrac12(T-T')_+^2+\vu\cdot\nabla\tfrac12(T-T')_+^2-\lambda\Delta\tfrac12(T-T')_+^2+\lambda\vert \nabla(T-T')_+\vert ^2
\\
+(\mathcal B(T)-\mathcal B(T'))(T-T')_+=(R-R')(T-T')_+\le 0&\,.
\end{aligned}
$$
Integrating over $\Omega$, and taking into account the boundary conditions satisfied by $\vu$ and $(T-T')$ shows that
$$
\frac{d}{dt}\int_\Omega\tfrac12(T-T')_+^2(t,\vx)\dd\vx+\lambda\int_\Omega\vert \nabla(T-T')_+\vert ^2(t,\vx)\dd\vx+\int_\Omega(\mathcal B(T)-\mathcal B(T'))(T-T')_+\le 0\,,
$$
since
$$
\int_{\d\Omega}\left(\vu\cdot n(T-T')_+^2-\lambda\frac{\d}{\d n}(T-T')_+^2\right)\dd\sigma=0\,.
$$
Then, $T\mapsto\mathcal B(T)$ is nondecreasing on $\mathbf R$, since $\kappa_\nu(1-a_\nu)\ge 0$ and $T\mapsto B_\nu(\min(T_+,T_M))$ is nondecreasing on $\mathbf R$ for each $\nu>0$. Hence 
$$
(\mathcal B(T)-\mathcal B(T'))(T-T')_+\ge 0
$$
so that
$$
\int_\Omega\tfrac12(T-T')_+^2(t,\vx)\dd\vx\le\int_\Omega\tfrac12(T_{in}-T'_{in})_+^2(\vx)\dd\vx=0\,.
$$
Therefore
$$
T_{in}\le T'_{in}\text{ on }\Omega\text{ and }R\le R'\text{ on }(0,+\infty)\times\Omega\implies T\le T'\text{ on }(0,+\infty)\times\Omega\,.
$$

This comparison argument shows that
$$
0\le J^{n+1}_\nu(t,\vx)\le B_\nu(T_M)\text{ on }(0,+\infty)\times\Omega\implies 0\le T^{n+1}\le T_M\text{ on }(0,+\infty)\times\Omega\,.
$$
By the same token,
$$
J^n_\nu\le J^{n+1}_\nu\text{ on }(0,+\infty)\times\Omega\implies T^n\le T^{n+1}\text{ on }(0,+\infty)\times\Omega\,.
$$
On the other hand, the monotonicity property of the function $\mathcal J$ shows that
$$
\begin{aligned}
T^{n-1}\le T^n\text{ and }J^{n-1}_\nu\le J^n_\nu\implies J_\nu^n=&\mathcal J[a_\nu J^{n-1}_\nu+(1-a_\nu)B_\nu(T^{n-1})]
\\
\le&\mathcal J[a_\nu J^n_\nu+(1-a_\nu)B_\nu(T^n)]=J^{n+1}_\nu
\end{aligned}
$$
on $(0,+\infty)\times\Omega$.
Summarising, we have essentially proved the following result.
\noindent
\begin{theorem} 
Assume that
$$
0\le\kappa_m\le\kappa_\nu\le\kappa_M\,,\qquad 0\le a_\nu\le a_M<1\,,\qquad\nu>0\,,~ a.e.\,,
$$
and pick boundary and initial data satisfying, for some $T_M$,
$$
0\le T_{in}(\vx)\le T_M\,,\quad 0\le Q_\nu(\vx,\vom)\le B_\nu(T_M)\,,\quad x\in\Omega\,,\,\,\vert \vom\vert =1\,,\,\,\nu>0\,.
$$
Then

\smallskip
\noindent
(1) the sequences $T^n$ and $J^n_\nu$ satisfy 
$$
0=T^0\le T^1\le\ldots\le T^n\le T^{n+1}\le\ldots\le T_M\qquad\text{ on }(0,\infty)\times\Omega\,,
$$
and
$$
0\le J^0_\nu\le J^1_\nu\le\ldots\le J^n_\nu\le J^{n+1}_\nu\le\ldots\le B_\nu(T_M)\quad\text{ on }(0,\infty)\times\Omega,
\text{ for all $\nu>0$.}
$$

\noindent
(2) There exist a temperature field $T\in C([0,+\infty);L^2(\Omega))\cap L^2_{loc}(0,\infty;H^1(\Omega))$ and a radiative intensity $I_\nu\in L^\infty((0,\infty)\times\Omega\times\SS\times(0,+\infty))$ satisfying \eqref{RTHeat} in the sense of weak solutions.
\end{theorem}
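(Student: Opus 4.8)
The plan is to read the theorem as a consolidation of the estimates already assembled above: first turn the monotonicity of $\mathcal J$ and the comparison principle for the drift-diffusion equation into an induction proving part~(1), then use part~(1) to pass to the limit $n\to\infty$ in Algorithm~\ref{algo:1} and identify the limit as a weak solution of \eqref{RTHeat}.

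For part~(1) I would run a single induction on $n$ for the joint statement ``$0\le T^{n-1}\le T^n\le T_M$ and $0\le J^{n-1}_\nu\le J^n_\nu\le B_\nu(T_M)$ on $(0,\infty)\times\Omega$ for every $\nu>0$''. The base case rests on $T^0\equiv0$, $J^0_\nu=\mathcal J[0]$, the already established bounds $0\le\mathcal J[0]\le\mathcal J[B_\nu(T_M)]=B_\nu(T_M)$, and $B_\nu(0)=0$ (whence $\mathcal B(0)=0$, so the constant $0$ is a subsolution of the $T^1$-equation). For the inductive step, assume the statement at level $n$. Since $\kappa_\nu(1-a_\nu)\ge0$, $0\le a_\nu\le a_M<1$ and $\theta\mapsto B_\nu(\min(\theta_+,T_M))$ is nondecreasing, the monotonicity of $\mathcal J$ gives
$$
0\le J^n_\nu=\mathcal J[a_\nu J^{n-1}_\nu+(1-a_\nu)B_\nu(T^{n-1})]\le\mathcal J[a_\nu J^n_\nu+(1-a_\nu)B_\nu(T^n)]=J^{n+1}_\nu\,,
$$
and, from $a_\nu J^n_\nu+(1-a_\nu)B_\nu(T^n)\le B_\nu(T_M)$, also $J^{n+1}_\nu\le\mathcal J[B_\nu(T_M)]=B_\nu(T_M)$. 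With $0\le J^{n+1}_\nu\le B_\nu(T_M)$ in hand I apply the comparison argument established above to the equation for $T^{n+1}$: it yields $0\le T^{n+1}$ (the constant $0$ is a subsolution, the forcing $\int_0^\infty\kappa_\nu(1-a_\nu)J^{n+1}_\nu\,\dd\nu$ being $\ge0$), then $T^{n+1}\le T_M$ (the constant $T_M$ is a supersolution, since $\mathcal B(T_M)=\int_0^\infty\kappa_\nu(1-a_\nu)B_\nu(T_M)\,\dd\nu\ge\int_0^\infty\kappa_\nu(1-a_\nu)J^{n+1}_\nu\,\dd\nu$), and finally $T^n\le T^{n+1}$ (same equation, same initial datum, nondecreasing forcing because $J^n_\nu\le J^{n+1}_\nu$). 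This closes the induction and gives part~(1).

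For part~(2), part~(1) provides, for a.e.\ $(t,\vx)$ and each $\nu>0$, monotone limits $T^n(t,\vx)\uparrow T(t,\vx)\in[0,T_M]$ and $J^n_\nu(t,\vx)\uparrow J_\nu(t,\vx)\in[0,B_\nu(T_M)]$, with convergence in every $L^p_{loc}$, $p<\infty$, by dominated convergence. I would then pass to the limit in the two couplings. In $J^{n+1}_\nu=\mathcal J[a_\nu J^n_\nu+(1-a_\nu)B_\nu(T^n)]$, monotone convergence inside the nonnegative-kernel integrals defining $\mathcal J$ yields $J_\nu=\mathcal J[a_\nu J_\nu+(1-a_\nu)B_\nu(T)]$; reconstructing $I_\nu$ by the method of characteristics from the isotropic source $S_\nu:=(1-a_\nu)B_\nu(T)+a_\nu J_\nu\in L^\infty$ then gives $0\le I_\nu\le B_\nu(T_M)$, the consistency relation $J_\nu=\tfrac1{4\pi}\int_\SS I_\nu\,\dd\vom$, and $I_\nu$ as a weak solution of the transport line of \eqref{RTHeat} with datum $Q_\nu$. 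For the temperature equation, the forcings $q^{n+1}:=\int_0^\infty\kappa_\nu(1-a_\nu)J^{n+1}_\nu\,\dd\nu$ are bounded by $\int_0^\infty\kappa_\nu(1-a_\nu)B_\nu(T_M)\,\dd\nu<\infty$ uniformly in $n,t,\vx$, so the representation $T^{n+1}(t,\cdot)=\Sigma(t,0)T_{in}+\int_0^t\Sigma(t,s)\big(q^{n+1}-\mathcal B(T^{n+1})\big)(s,\cdot)\,\dd s$ bounds $T^{n+1}$ uniformly in $C([0,\bar T];L^2(\Omega))\cap L^2(0,\bar T;H^1(\Omega))$ for every $\bar T$; by Tonelli and monotone convergence $q^{n+1}\to\int_0^\infty\kappa_\nu(1-a_\nu)J_\nu\,\dd\nu$ in $L^2_{loc}$, while $\mathcal B(T^{n+1})\to\mathcal B(T)=\int_0^\infty\kappa_\nu(1-a_\nu)B_\nu(T)\,\dd\nu$ (using $\min(T_+,T_M)=T$ because $0\le T\le T_M$) by the Lipschitz bound on $\mathcal B$ and the $L^2_{loc}$ convergence of $T^{n+1}$. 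Passing to the limit in the variational formulation identifies $T$ as a solution, hence (by the argument already used for $T^{n+1}$) a solution in $C([0,\infty);L^2(\Omega))\cap L^2_{loc}(0,\infty;H^1(\Omega))$, of the temperature line of \eqref{RTHeat} for the reconstructed $J_\nu$, and $(T,I_\nu)$ is the announced weak solution.

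I expect the only genuine work to be the two limit passages in the nonlocal terms: the angular-spatial operator $\mathcal J$, where one must check that the a.e.\ monotone pointwise convergence of the iterates survives averaging along characteristics, and the frequency integral $\int_0^\infty\kappa_\nu(1-a_\nu)(\,\cdot\,)\,\dd\nu$; together with verifying that $T$, $J_\nu$ and the reconstructed $I_\nu$ solve one and the same system in a mutually consistent weak sense, in particular that $J_\nu$ is genuinely the angular mean of $I_\nu$. Monotonicity is what makes this painless: every iterate increases and stays below an explicit, $\nu$-integrable bound, so monotone convergence and Tonelli's theorem apply at each step, and no compactness beyond the a priori parabolic energy estimate is needed.
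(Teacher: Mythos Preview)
Your proposal is correct and follows essentially the same route as the paper: part~(1) is precisely the induction the paper assembles just before stating the theorem, and part~(2) is monotone convergence in $\mathcal J$, reconstruction of $I_\nu$ by characteristics, and passage to the limit in the weak formulation of the temperature equation using a uniform $L^2_{loc}(0,\infty;H^1(\Omega))$ bound on $T^n$. The only cosmetic difference is that the paper obtains this last bound by writing out the energy balance explicitly (multiply the equation by $T^n$ and integrate) and singles out $\int\lambda\nabla T^n\cdot\nabla\phi$ as the one term requiring weak $H^1$ convergence rather than dominated convergence, whereas you fold this into the parabolic regularity coming with the representation formula.
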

\begin{proof}
Statement (1) is a consequence of the monotonicity properties established above. It implies statement (2) by the following elementary observations: first, one can pass to the limit by monotone convergence in the expression 
$$
J_\nu^{n+1}=\mathcal J[a_\nu J^n_\nu+(1-a_\nu)B_\nu(T^n)]
$$
to find that
$$
J_\nu=\mathcal J[[a_\nu J_\nu+(1-a_\nu)B_\nu(T)]\,,
$$
since $B_\nu$ is an increasing function for each $\nu>0$. By the method of characteristics, the formula
$$
\begin{aligned}
I_\nu(t,\vx,\vom)=\mathbf 1_{\tau_{\vx,\vom}<+\infty}Q_\nu(\vx-\tau_{\vx,\vom}\vom)\e^{-\kappa_\nu\tau_{\vx,\vom}}
\\
+\int_0^{\tau_{\vx,\vom}}\e^{-\kappa_\nu s}\kappa_\nu(a_\nu J_\nu(t,\vx-s\vom)+(1-a_\nu)B_\nu(T(t,\vx-s\vom)))\dd s
\end{aligned}
$$
defines a solution of the transfer equation in \eqref{RTHeat}.

 Finally, for each $\phi\in C_c([0,\infty);H^1(\Omega))$ such that $\d_t\phi\in L^2((0,\infty)\times\Omega)$, one has
 $$
 \begin{aligned}
 \int_0^\infty\int_\Omega(\lambda\nabla T^n\cdot\nabla\phi-T^n(\d_t\phi+\vu\cdot\nabla\phi))\dd\vx\dd t=\int_\Omega T_{in}(\vx)\phi(0,\vx)\dd\vx
 \\
 \int_0^\infty\int_0^\infty\int_\Omega\kappa_\nu(1-a_\nu)(J^n_\nu-B_\nu(T^n))\phi\dd\vx\dd\nu\dd t&\,.
 \end{aligned}
 $$
 One can pass to the limit by dominated convergence in all the terms except the one involving $\nabla T^n$. This last term is mastered by the energy balance for the convection-diffusion equation:
 $$
 \begin{aligned}
 \tfrac12\int_\Omega T^n(t,\vx)^2\dd x+\lambda\int_0^t\int_\Omega\vert \nabla T^n(t,\vx)\vert ^2\dd\vx=\tfrac12\int_\Omega T_{in}(\vx)^2\dd x 
 \\
 +\int_0^t\int_0^\infty\int_\Omega\kappa_\nu(1-a_\nu)(J^n_\nu-B_\nu(T^n))T^n\dd\vx\dd\nu\dd s\le t\vert \Omega\vert \mathcal B(T_M)&\,,
\end{aligned}
$$
which implies that $T^n$ is bounded in $L^2_{loc}(0,\infty;H^1(\Omega))$. Since we already know that $T^n\to T$ in $L^p((0,\tau)\times\Omega)$ for all $p\in[1,\infty)$ as $n\to\infty$, we conclude that $T^n\to T$ weakly in $L^2_{loc}(0,\infty;H^1(\Omega))$.
With this last piece of information, we pass to the limit in the weak formulation of the convection-diffusion equation and conclude that
 $$
 \begin{aligned}
 \int_0^\infty\int_\Omega(\lambda\nabla T\cdot\nabla\phi-T(\d_t\phi+\vu\cdot\nabla\phi))\dd\vx\dd t=\int_\Omega T_{in}(\vx)\phi(0,\vx)\dd\vx
 \\
 \int_0^\infty\int_0^\infty\int_\Omega\kappa_\nu(1-a_\nu)(J_\nu-B_\nu(T))\phi\dd\vx\dd\nu\dd t&\,.
 \end{aligned}
 $$
 In other words, $T$ is a weak solution of the second equation in \eqref{RTHeat}. This concludes the proof.

\end{proof}

\section{Uniqueness and maximum principle}\label{sec:4}

For each $\eps>0$ let $s_\eps\in C^\infty(\bR)$ be such that
$$
s_\eps(z)=0\text{ for }z\le 0\,,\quad s_\eps(z)=1\text{ for }z\ge \eps\,,\quad 0\le\eps s'_\eps(z)\le 2\text{ for }z\in\bR\,,
$$
and let 
$
s_+(z)=\mathbf 1_{z>0}\,.
$
Set
$$
S_\eps(y)=\int_0^ys_\eps(z)dz\,.
$$
Henceforth, we use the notation
$$
\lA\phi\rA:=\tfrac1{4\pi}\int_0^\infty\int_{\SS}\phi(\vom,\nu)\dd\vom\dd\nu
$$
Let $(I_\nu,T)$ and $(I'_\nu,T')$ be two solutions of the system above; then
$$
\nabla\cdot\lA\vom(I_\nu-I'_\nu)_+\rA+D_1+D_2=0\,,
$$
where
$$
D_1:=\lA\kappa_\nu(1-a_\nu)((I_\nu-I'_\nu)-(B_\nu(T)-B_\nu(T')))s_+(I_\nu-I'_\nu)\rA\,,
$$
and 
$$
D_2:=\lA\kappa_\nu a_\nu((I_\nu-I'_\nu)-(J_\nu-J'_\nu))s_+(I_\nu-I'_\nu)\rA\,.
$$
Since
$$
\int_{\SS}((I_\nu-I'_\nu)-(J_\nu-J'_\nu))\dd\vom=0,
$$
one has
$$
D_2:=\lA\kappa_\nu a_\nu((I_\nu-I'_\nu)-(J_\nu-J'_\nu))(s_+(I_\nu-I'_\nu)-s_+(J_\nu-J'_\nu))\rA\ge 0\,,
$$
since $z\mapsto s_+(z)$ is nondecreasing, so that
$$
((I_\nu-I'_\nu)-(J_\nu-J'_\nu))(s_+(I_\nu-I'_\nu)-s_+(J_\nu-J'_\nu))\ge 0\,.
$$

On the other hand
$$
D_1=D_3^\eps+s_\eps(T-T')\lA\kappa_\nu(1-a_\nu)((I_\nu-I'_\nu)-(B_\nu(T)-B_\nu(T')))\rA
$$
where
$$
D_3^\eps=\lA\kappa_\nu(1-a_\nu)((I_\nu-I'_\nu)-(B_\nu(T)-B_\nu(T')))(s_+(I_\nu-I'_\nu)-s_\eps(T-T'))\rA\,,
$$
while
$$
\begin{aligned}
\d_tS_\eps(T-T')+\vu\cdot\nabla S_\eps(T-T')-\lambda\Delta(T-T')s_\eps(T-T')
\\
=s_\eps(T-T')\int_0^\infty\kappa_\nu(1-a_\nu)((J_\nu-J'_\nu)-(B_\nu(T)-B_\nu(T')))\dd\nu
\\
=4\pi s_\eps(T-T')\lA\kappa_\nu(1-a_\nu)((J_\nu-J'_\nu)-(B_\nu(T)-B_\nu(T'))\rA\,.
\end{aligned}
$$
Thus
$$
\begin{aligned}
4\pi\nabla\cdot\lA\vom(I_\nu-I'_\nu)_+\rA+\d_tS_\eps(T-T')+\vu\cdot\nabla S_\eps(T-T')-\lambda\Delta(T-T')s_\eps(T-T')
\\
+4\pi(D_3^\eps+D_2)=0
\end{aligned}
$$
Then we integrate both sides of this equality on $\Omega$:
$$
\begin{aligned}
\frac{d}{dt}\int_\Omega S_\eps(T-T')d\vx+4\pi\int_{\d\Omega}\lA\vom\cdot\vn(I_\nu-I'_\nu)_+\rA\dd\sigma(\vx)+\int_{\d\Omega}S_\eps(T-T')\vu\cdot\vn\dd\sigma(\vx)
\\
+\lambda\int_\Omega\vert \nabla(T-T')\vert ^2s'_\eps(T-T')\dd\vx-\lambda\int_{\d\Omega}s_\eps(T-T')\frac{\d(T-T')}{\d n}\dd\sigma(\vx)\\
\\
=-4\pi\int_\Omega(D_3^\eps+D_2)\dd\vx\,.
\end{aligned}
$$
Using the boundary conditions, specifically that
$$
I_\nu(\vx,\vom)\!=\!Q_\nu(\vx,\vom)\text{ and }I'_\nu(\vx,\vom)\!=\!Q'_\nu(\vx,\vom)\,,\quad\vom\cdot\vn<0\,,
$$
with
$$
(Q_\nu-Q'_\nu)(\vx,\vom)\le 0\,,\quad\vom\cdot\vn<0\,,
$$
implies 
$$
\begin{aligned}
\int_{\d\Omega}\lA\vom\cdot\vn(I_\nu-I'_\nu)_+\rA\dd\sigma(\vx)=\int_{\d\Omega}\lA(\vom\cdot\vn)_+(I_\nu-I'_\nu)_+\rA\dd\sigma(\vx)\ge 0,
\\
\int_{\d\Omega}S_\eps(T-T')\vu\cdot\vn\dd\sigma(\vx)=\int_{\d\Omega}s_\eps(T-T')\frac{\d(T-T')}{\d n}\dd\sigma(\vx)=0.
\end{aligned}
$$
Hence
$$
\begin{aligned}
\int_\Omega S_\eps(T-T')(t,\vx)d\vx+4\pi\int_0^t\int_{\d\Omega}\lA(\vom\cdot\vn)_+(I_\nu-I'_\nu)_+\rA(\tau,\vx)\dd\sigma(\vx)\dd\tau
\\
+\lambda\int_0^t\int_\Omega\vert \nabla(T-T')(\tau,\vx)\vert ^2s'_\eps(T-T')(\tau,\vx)\dd\vx\dd\tau+4\pi\int_0^t\int_\Omega(D_3^\eps+D_2)(\tau,\vx)\dd\vx\dd\tau
\\
=\int_\Omega S_\eps(T-T')(0,\vx)d\vx=0
\end{aligned}
$$
under the assumption that
$$
T\Big\vert _{t=0}=T_{in}\quad\text{ and }T'\Big\vert _{t=0}=T'_{in}\quad\text{ with }T_{in}\le T'_{in}\,.
$$

Assume that 
$$
\kappa_\nu(1-a_\nu)(I_\nu+I'_\nu+B_\nu(T)+B_\nu(T'))\in L^1([0,t]\times\Omega\times\SS\times(0,+\infty));
$$
by dominated convergence
$$
\int_0^t\int_\Omega D_3^\eps(\tau,\vx)\dd\vx\dd\tau\to\int_0^t\int_\Omega D_3(\tau,\vx)\dd\vx\dd\tau
$$
where
$$
D_3=\lA\kappa_\nu(1-a_\nu)((I_\nu-I'_\nu)-(B_\nu(T)-B_\nu(T')))(s_+(I_\nu-I'_\nu)-s_+(T-T'))\rA\ge 0
$$
since $z\mapsto s_+(z)$ is nondecreasing and
$$
s_+(T-T')=s_+(B_\nu(T)-b_\nu(T'))
$$
because $B_\nu$ is increasing for each $\nu>0$, so that
$$
((I_\nu-I'_\nu)-(B_\nu(T)-B_\nu(T')))(s_+(I_\nu-I'_\nu)-s_+(T-T'))\ge 0\,.
$$
By Fatou's lemma
$$
\int_\Omega S_\eps(T-T')(t,\vx)d\vx\to\int_\Omega(T-T')_+(t,\vx)d\vx
$$
for a.e. $t\ge 0$, so that
$$
\begin{aligned}
\int_\Omega(T-T')_+(t,\vx)d\vx+4\pi\int_0^t\int_{\d\Omega}\lA(\vom\cdot\vn)_+(I_\nu-I'_\nu)_+\rA(\tau,\vx)\dd\sigma(\vx)\dd\tau
\\
+4\pi\int_0^t\int_\Omega(D_3+D_2)(\tau,\vx)\dd\vx\dd\tau\le 0\,,
\end{aligned}
$$
since
$$
\varliminf_{\eps\to 0}\int_0^t\int_\Omega\vert \nabla(T-T')(\tau,\vx)\vert ^2s'_\eps(T-T')(\tau,\vx)\dd\vx\dd\tau\ge 0\,.
$$
Since all the terms on the left hand side of the previous equality are nonnegative, one has
$$
\begin{aligned}
\int_\Omega(T-T')_+(t,\vx)d\vx=&4\pi\int_0^t\int_{\d\Omega}\lA(\vom\cdot\vn)_+(I_\nu-I'_\nu)_+\rA(\tau,\vx)\dd\sigma(\vx)\dd\tau
\\
=&4\pi\int_0^t\int_\Omega(D_3+D_2)(\tau,\vx)\dd\vx\dd\tau=0\quad\text{ for a.e. }t>0\,.
\end{aligned}
$$
Once it is known that $T\le T'$ a.e. on $(0,+\infty)\times\Omega$, one has
$$
\begin{aligned}
\int_\Omega\lA\kappa_\nu(1-a_\nu)(I_\nu-I'_\nu)_+\rA\dd\vx+\int_{\d\Omega}\lA(\vom\cdot\vn)_+(I_\nu-I'_\nu)_+\rA(\tau,\vx)\dd\sigma(\vx)+\int_\Omega D_2\dd\vx
\\
=\int_\Omega\lA\kappa_\nu(1-a_\nu)(B_\nu(T)-B_\nu(T'))(t,\vx)s_+(I_\nu-I'_\nu)\rA\dd\vx\le 0
\end{aligned}
$$
since $B_\nu(T)-B_\nu(T')\le 0$ while $s_+(I_\nu-I'_\nu)\ge 0$, so that $I_\nu\le I'_\nu$ a.e. on $(0,+\infty)\times\Omega\times\SS\times(0,+\infty)$.

Summarising, we have proved the following

\begin{theorem}
Let $(I_\nu,T)$ and $(I'_\nu,T')$ be two solutions of \eqref{RTHeat} such that
$$
\kappa_\nu(1-a_\nu)(I_\nu+I'_\nu+B_\nu(T)+B_\nu(T'))\in L^1([0,t]\times\Omega\times\SS\times(0,+\infty))
$$
for all $t>0$. Assume that
$$
T\Big\vert _{t=0}=T_{in}\quad\text{ and }T'\Big\vert _{t=0}=T'_{in}\quad\text{ with }T_{in}\le T'_{in}\,,
$$
and that, when $\vx\in\partial\Omega$,
$$
I'_\nu(\vx,\vom)\!=\!Q'_\nu(\vx,\vom) \text{ and } I_\nu(\vx,\vom)\!=\!Q_\nu(\vx,\vom)\le Q'_\nu(\vx,\vom)\,,\quad\vom\cdot\vn<0\,.
$$
Then
$$
I_\nu\le I'_\nu\text{ and }T\le T'\,.
$$
If $T'_{in}=T_{in}$ and $Q_\nu=Q'_\nu$, exchanging the roles of $(I_\nu,T)$ and $(I'_\nu,T')$ in the theorem above leads to the following uniqueness result.
\end{theorem}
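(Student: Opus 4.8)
The plan is to establish the comparison statement $T\le T'$ and $I_\nu\le I'_\nu$; the uniqueness claim then follows at once by exchanging the roles of $(I_\nu,T)$ and $(I'_\nu,T')$. Because the maps $z\mapsto z_+$ and $z\mapsto s_+(z)$ are not smooth, I would carry out every manipulation with the smooth approximation $s_\eps$ and its primitive $S_\eps$ introduced above, and pass to the limit $\eps\to0$ only at the very end.

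\emph{Step 1: the radiative balance.} Subtract the two transfer equations, multiply by $s_+(I_\nu-I'_\nu)$ — this renormalization is legitimate thanks to the characteristic representation of $I_\nu$ — and average in $(\vom,\nu)$ with the bracket $\lA\cdot\rA$. This yields $\nabla\cdot\lA\vom(I_\nu-I'_\nu)_+\rA+D_1+D_2=0$, with $D_1,D_2$ as above. The scattering contribution is controlled by the identity $\int_{\SS}((I_\nu-I'_\nu)-(J_\nu-J'_\nu))\dd\vom=0$, which allows replacing $s_+(I_\nu-I'_\nu)$ by $s_+(I_\nu-I'_\nu)-s_+(J_\nu-J'_\nu)$ under the bracket; monotonicity of $s_+$ then gives $D_2\ge0$. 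In $D_1$ I would isolate the isotropic piece $s_\eps(T-T')\lA\kappa_\nu(1-a_\nu)((I_\nu-I'_\nu)-(B_\nu(T)-B_\nu(T')))\rA$, leaving a remainder $D_3^\eps$.

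\emph{Step 2: coupling with the heat equation.} Multiply the equation satisfied by $T-T'$ by $s_\eps(T-T')$: the time derivative becomes $\d_tS_\eps(T-T')$, the convective term becomes $\vu\cdot\nabla S_\eps(T-T')$, and the diffusion term becomes $-\lambda\Delta(T-T')\,s_\eps(T-T')$. Its right-hand side reproduces precisely the isotropic piece isolated in Step 1, so adding the two identities cancels that piece. Integrating over $\Omega$, I would discard all boundary contributions using the hypotheses: the radiative boundary term equals $\int_{\d\Omega}\lA(\vom\cdot\vn)_+(I_\nu-I'_\nu)_+\rA\dd\sigma\ge0$ because $Q_\nu\le Q'_\nu$ where $\vom\cdot\vn<0$; the convective boundary term vanishes since $\vu=0$ on $\d\Omega$; the term $\int_{\d\Omega}s_\eps(T-T')\,\d_n(T-T')\dd\sigma$ vanishes by the Neumann condition; and the remaining bulk term $\lambda\int_\Omega|\nabla(T-T')|^2s'_\eps(T-T')$ is nonnegative. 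Integrating in time and using $S_\eps(T_{in}-T'_{in})=0$ (valid since $T_{in}\le T'_{in}$ and $S_\eps\equiv0$ on $(-\infty,0]$) produces an identity all of whose terms are nonnegative.

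\emph{Step 3: the limit $\eps\to0$.} Here the integrability hypothesis $\kappa_\nu(1-a_\nu)(I_\nu+I'_\nu+B_\nu(T)+B_\nu(T'))\in L^1$ is essential. By dominated convergence $D_3^\eps\to D_3$, and $D_3\ge0$ because $B_\nu$ increasing gives $s_+(T-T')=s_+(B_\nu(T)-B_\nu(T'))$, making the integrand a product of the form $(a-b)(s_+(a)-s_+(b))\ge0$. Fatou's lemma gives $\int_\Omega S_\eps(T-T')(t)\to\int_\Omega(T-T')_+(t)$ for a.e.\ $t$, and the bulk diffusion term only needs $\liminf\ge0$. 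Hence $\int_\Omega(T-T')_+(t)\dd\vx=0$ for a.e.\ $t$, i.e.\ $T\le T'$. Finally, knowing $B_\nu(T)\le B_\nu(T')$, substitute back into the radiative balance of Step 1: its right-hand side becomes $\le0$ while every left-hand term is $\ge0$, forcing $I_\nu\le I'_\nu$ a.e. The main obstacle is exactly this last limit: $D_3^\eps$ carries no sign for $\eps>0$, so it cannot be discarded before passing to the limit; one must retain it throughout Step 2 and recover its sign only at $\eps=0$, which is why the $L^1$ control of the emission and absorption terms cannot be dispensed with.
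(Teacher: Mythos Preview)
Your proposal is correct and follows essentially the same route as the paper: renormalize the transfer equation by $s_+(I_\nu-I'_\nu)$ to produce $D_1+D_2$ with $D_2\ge 0$, split $D_1$ into $D_3^\eps$ plus an isotropic piece, cancel that piece against the heat equation multiplied by $s_\eps(T-T')$, integrate over $\Omega$ and $[0,t]$, and pass to the limit $\eps\to0$ via dominated convergence on $D_3^\eps$ (this is where the $L^1$ hypothesis enters) and Fatou on $S_\eps$; then return to the integrated radiative balance once $T\le T'$ is known. One small expository slip: at the end of Step~2 the identity does \emph{not} yet have all terms nonnegative, since $D_3^\eps$ has no sign for $\eps>0$ --- but you correctly flag exactly this point in Step~3, so the argument stands.
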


\begin{corollary}
There is at most one solution $(I_\nu,T)$ of \eqref{RTHeat} such that
$$
\kappa_\nu(1-a_\nu)(I_\nu+B_\nu(T))\in L^1([0,t]\times\Omega\times\SS\times(0,+\infty))
\quad
\text{for all $t>0$}.
$$
\end{corollary}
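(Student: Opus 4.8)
The plan is to obtain the corollary as an immediate consequence of the preceding comparison theorem, applied twice with the two solutions exchanged. So let $(I_\nu,T)$ and $(I'_\nu,T')$ both be solutions of \eqref{RTHeat} sharing the same initial datum $T_{in}$ and the same incoming radiative data $Q_\nu$, and each satisfying the integrability bound $\kappa_\nu(1-a_\nu)(I_\nu+B_\nu(T))\in L^1([0,t]\times\Omega\times\SS\times(0,+\infty))$ for all $t>0$, and likewise with primes.

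First I would check that the hypothesis of the theorem is met: one needs $\kappa_\nu(1-a_\nu)(I_\nu+I'_\nu+B_\nu(T)+B_\nu(T'))\in L^1([0,t]\times\Omega\times\SS\times(0,+\infty))$, and this follows at once from the triangle inequality in $L^1$, being bounded by the sum of the two individual hypotheses of the corollary — no sign information is needed. Then, since $T_{in}=T'_{in}$ and $Q_\nu=Q'_\nu$, in particular $T_{in}\le T'_{in}$ and $Q_\nu\le Q'_\nu$ on the incoming set $\{\vom\cdot\vn<0\}$, so the theorem applies and gives $I_\nu\le I'_\nu$ and $T\le T'$ a.e. on $(0,+\infty)\times\Omega\times\SS\times(0,+\infty)$ and on $(0,+\infty)\times\Omega$ respectively.

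Next I would exchange the roles of the two solutions: the joint integrability hypothesis is symmetric in $(I_\nu,T)$ and $(I'_\nu,T')$, and the data still coincide, so the theorem applied the other way round yields $I'_\nu\le I_\nu$ and $T'\le T$. Combining the two pairs of inequalities gives $I_\nu=I'_\nu$ and $T=T'$ almost everywhere, which is exactly the claimed uniqueness.

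I do not anticipate a genuine obstacle here, since all the analytic content — the entropy-type identity built from the regularised sign function $s_\eps$, the dominated-convergence and Fatou passages to the limit, and the signs of the boundary contributions — has already been established in the proof of the theorem. The only points deserving a word of care are the verification that the corollary's per-solution $L^1$ assumption implies the theorem's joint $L^1$ assumption, and the observation that the comparison theorem's conclusion is unaffected by which solution carries the prime; both are transparent.
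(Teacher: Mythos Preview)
Your proof is correct and follows exactly the approach the paper indicates: apply the comparison theorem once to obtain $I_\nu\le I'_\nu$, $T\le T'$, then swap the roles of the two solutions to obtain the reverse inequalities, whence equality. The only detail you add beyond the paper's one-line remark is the (immediate) verification that the per-solution $L^1$ bounds sum to give the joint $L^1$ hypothesis of the theorem.
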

In the case where $(I'_\nu,T)$ is a Planck equilibrium, i.e. $I'_\nu=B_\nu(T')$ with $T'=$constant, one obtains Mercier's maximum principle:

\begin{corollary}
If $0\le Q_\nu\le B_\nu(T_M)$ and $0\le T_{in}\le T_M$ and $\Omega$ has finite volume, the solution $(I_\nu,T)$ of \eqref{RTHeat} such that
$$
\kappa_\nu(1-a_\nu)(I_\nu+B_\nu(T))\in L^1([0,t]\times\Omega\times\SS\times(0,+\infty))
$$
satisfies
$$
0\le I_\nu(t,\vx,\vom)\le B_\nu(T_M)\quad\text{ and }\quad 0\le T(t,\vx)\le T_M
$$
for a.e. $(t,\vx,\vom,\nu)$ in $(0,+\infty)\times\Omega\times\SS\times(0,+\infty)$.
\end{corollary}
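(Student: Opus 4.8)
The plan is to obtain both the upper and the lower bounds as immediate consequences of Theorem~2, by comparing the given solution $(I_\nu,T)$ with two space- and time-independent Planck equilibria: $(B_\nu(T_M),T_M)$ for the upper bound, and $(0,0)$ for the lower bound.

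First I would check that for any constant $\Theta\ge 0$ the pair $I'_\nu\equiv B_\nu(\Theta)$, $T'\equiv\Theta$ is a genuine (constant) solution of \eqref{RTHeat}. Since $I'_\nu$ is constant, $\vom\cdot\nabla I'_\nu=0$ and $J'_\nu=B_\nu(\Theta)$, so the transfer equation reduces to $\kappa_\nu B_\nu(\Theta)=\kappa_\nu(1-a_\nu)B_\nu(\Theta)+\kappa_\nu a_\nu B_\nu(\Theta)$, which holds identically; and since $\Theta$ is constant, the temperature equation reduces to $0=\int_0^\infty\kappa_\nu(1-a_\nu)(B_\nu(\Theta)-B_\nu(\Theta))\dd\nu=0$. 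The Neumann condition $\d T'/\d n=0$ is trivial, the initial datum is the constant $\Theta$, and the incoming boundary datum is $Q'_\nu\equiv B_\nu(\Theta)$. For $\Theta=0$ one uses $B_\nu(0)=0$ (extending $B_\nu$ by $0$ on $(-\infty,0]$, consistently with the Planck formula as $T\to 0^+$).

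Then I would apply Theorem~2 twice. For the \emph{upper bound}, take $(I_\nu,T)$ to be the solution of the statement and $(I'_\nu,T')=(B_\nu(T_M),T_M)$. The hypothesis $0\le T_{in}\le T_M$ gives $T_{in}\le T'_{in}=T_M$, and $0\le Q_\nu\le B_\nu(T_M)=Q'_\nu$ on the incoming boundary. The integrability requirement of Theorem~2 asks that $\kappa_\nu(1-a_\nu)(I_\nu+I'_\nu+B_\nu(T)+B_\nu(T'))\in L^1([0,t]\times\Omega\times\SS\times(0,+\infty))$; here $\kappa_\nu(1-a_\nu)(I_\nu+B_\nu(T))\in L^1$ by assumption, while $I'_\nu=B_\nu(T')=B_\nu(T_M)$ contributes $2\int_0^t\!\int_\Omega\!\int_0^\infty\kappa_\nu(1-a_\nu)B_\nu(T_M)\dd\nu\dd\vx\dd\tau\le 2t|\Omega|\kappa_M\int_0^\infty B_\nu(T_M)\dd\nu<\infty$, the finiteness of $\int_0^\infty B_\nu(T_M)\dd\nu$ being the Stefan--Boltzmann law and $|\Omega|<\infty$ being the finite-volume hypothesis. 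Theorem~2 then gives $I_\nu\le B_\nu(T_M)$ and $T\le T_M$. For the \emph{lower bound}, I would exchange the roles: take the zero equilibrium $(0,0)$ as the ``unprimed'' solution and $(I_\nu,T)$ as the ``primed'' one. Then $0\le T_{in}$ and $0\le Q_\nu$ supply the required orderings of initial and incoming boundary data, and the relevant combination is $\kappa_\nu(1-a_\nu)(0+I_\nu+B_\nu(0)+B_\nu(T))=\kappa_\nu(1-a_\nu)(I_\nu+B_\nu(T))\in L^1$ by assumption; Theorem~2 yields $0\le I_\nu$ and $0\le T$ a.e.

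I do not expect a serious obstacle: once the two equilibria are recognized as admissible comparison solutions, everything reduces to bookkeeping. The only points deserving care are the verification that $(0,0)$ and $(B_\nu(T_M),T_M)$ truly solve \eqref{RTHeat} (in particular the value $B_\nu(0)=0$), and the observation that the finite-volume assumption together with the Stefan--Boltzmann integrability of $\nu\mapsto B_\nu(T_M)$ is precisely what places the constant equilibria inside the $L^1$ class required by Theorem~2.
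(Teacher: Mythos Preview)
Your proposal is correct and follows exactly the approach indicated in the paper: the paper's proof is essentially the single remark preceding the corollary, namely that one obtains the result by taking the comparison solution $(I'_\nu,T')$ in Theorem~2 to be a constant Planck equilibrium $I'_\nu=B_\nu(T')$, $T'=\text{const}$. You have correctly filled in the details the paper leaves implicit --- the verification that such equilibria solve \eqref{RTHeat}, the two applications with $\Theta=T_M$ and $\Theta=0$, and the use of the finite-volume hypothesis together with Stefan--Boltzmann to check the $L^1$ condition for the constant equilibrium.
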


\section{A Numerical Scheme}\label{sec:5}
We begin with an important observation for the numerical implementation:
\begin{proposition}
Equation \eqref{JofS} can be written as
\begin{equation}\label{convol}
\tilde J_\nu(\vx) = Y_{\kappa_\nu}(\vx)\star \tilde S_\nu(\vx) + \tilde S_\nu^E(\vx),
\text{ with } Y_{\kappa_\nu}(\vx) = \kappa_\nu\frac{\e^{-\kappa_\nu\vert\vx\vert}}{\pi\vert 2\vx\vert^{d-1}}, ~d=2,3.
\end{equation}
where $\star$ denotes a convolution and tildes indicate an extension by zero outside $\Omega$ and
\begin{equation}\label{SE}
S_\nu^E(\vx) = \frac1{2^{d-1}\pi}\int_{\vert\vom\vert=1}{\bf 1}_{\{\tau_{\vx,\vom}<+\infty\}}Q_\nu(\vx-s\vom)\e^{-\kappa_\nu\tau_{\vx,\vom}}d\omega
\end{equation}
\end{proposition}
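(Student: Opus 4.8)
The plan is to keep the passive parameter $t$ frozen throughout, to read off the first (boundary) term of \eqref{JofS} directly, and to produce the second (volume) term from a change of variables from spherical to Cartesian coordinates centred at $\vx$.

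The first term of $\mathcal J[S_\nu]$ in \eqref{JofS}, namely $\tfrac1{4\pi}\int_\SS\mathbf 1_{\tau_{\vx,\vom}<+\infty}Q_\nu(\vx-\tau_{\vx,\vom}\vom)\e^{-\kappa_\nu\tau_{\vx,\vom}}\dd\vom$, is nothing but $\tilde S_\nu^E(\vx)$: for $d=3$ one has $2^{d-1}\pi=4\pi$, and in \eqref{SE} it is understood that $Q_\nu$ is evaluated at the boundary point $\vx-\tau_{\vx,\vom}\vom$; the tilde merely records that $J_\nu$, hence $S_\nu^E$, is extended by $0$ for $\vx\notin\Omega$. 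In the planar case $d=2$ one replaces $\SS$ by the unit circle and $\tfrac1{4\pi}$ by $\tfrac1{2\pi}=\tfrac1{2^{d-1}\pi}$; the computation below is identical.

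For the second term $\tfrac1{4\pi}\int_\SS\int_0^{\tau_{\vx,\vom}}\e^{-\kappa_\nu s}\kappa_\nu S_\nu(\vx-s\vom)\dd s\,\dd\vom$, I would first note that $\vx-s\vom\notin\Omega$ for $s>\tau_{\vx,\vom}$ by the very definition of the supremum, so $\tilde S_\nu(\vx-s\vom)=0$ there; hence the upper limit $\tau_{\vx,\vom}$ may be replaced by $+\infty$ once $S_\nu$ is replaced by its zero-extension $\tilde S_\nu$ (this also disposes of the excursions of the backward ray $s\mapsto\vx-s\vom$ outside $\Omega$ when $\Omega$ is not convex, since those stretches contribute nothing). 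Passing to the variable $\vy:=\vx-s\vom$, i.e. using spherical coordinates centred at $\vx$, one has $s=\vert\vx-\vy\vert$, $\vom=(\vx-\vy)/\vert\vx-\vy\vert$ and $\dd\vy=s^{d-1}\dd s\,\dd\vom$, whence $\dd s\,\dd\vom=\dd\vy/\vert\vx-\vy\vert^{d-1}$ and the iterated integral becomes
$$
\int_{\R^d}\frac{\kappa_\nu\,\e^{-\kappa_\nu\vert\vx-\vy\vert}}{2^{d-1}\pi\,\vert\vx-\vy\vert^{d-1}}\,\tilde S_\nu(\vy)\,\dd\vy=(Y_{\kappa_\nu}\star\tilde S_\nu)(\vx)\,,
$$
with exactly the kernel $Y_{\kappa_\nu}(\vx)=\kappa_\nu\e^{-\kappa_\nu\vert\vx\vert}/(\pi\vert 2\vx\vert^{d-1})$ claimed in \eqref{convol}. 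Adding the two contributions yields \eqref{convol}.

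The one place needing care is the measure-theoretic bookkeeping, and that is where the modest ``main obstacle'' lies: one checks that $Y_{\kappa_\nu}\in L^1(\R^d)$ (the singularity $\vert\vx\vert^{-(d-1)}$ is locally integrable in $\R^d$ and $\e^{-\kappa_\nu\vert\vx\vert}$ provides decay at infinity; if $\kappa_\nu=0$ both $Y_{\kappa_\nu}$ and the volume source $\kappa_\nu S_\nu$ vanish and the identity is trivial) and that $\tilde S_\nu\in L^1(\R^d)\cap L^\infty(\R^d)$ since $\Omega$ is bounded and $S_\nu$ is bounded. Tonelli's theorem then legitimises the interchange of integrations and the change of variables above, while Young's inequality guarantees that the convolution in \eqref{convol} is finite for a.e.\ $\vx$.
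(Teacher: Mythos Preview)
Your argument is correct and follows the same route as the paper: extend $S_\nu$ by zero to replace the upper limit $\tau_{\vx,\vom}$ by $+\infty$, then pass from $(s,\vom)$ to Cartesian coordinates via $\vy=\vx-s\vom$ (equivalently $\vx'=s\vom$, which is exactly the paper's ``integration in spherical coordinates with $|\vx|=s$''). Your write-up is simply more explicit---identifying the boundary term with $\tilde S_\nu^E$, handling the $d=2$ normalisation, noting the non-convex case, and adding the $L^1$/Tonelli bookkeeping---whereas the paper compresses all of this into a single displayed equality.
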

\begin{proof}
This is because, by integration in spherical coordinates with $\vert \vx \vert=s$, 
\[
\int_{\vert\vom\vert=1}\int_0^\infty \kappa_\nu\tilde S_\nu(\vx-s\vom)\e^{-\kappa_\nu s}d s d\omega
= \int_{\R^{d}}\kappa_\nu\tilde S_\nu(\vx-\vx')\e^{-\kappa_\nu \vert\vx'\vert}\frac{d x'}{\vert x'\vert^{d-1}}
\]
\end{proof}
Notice that the Fourier transform of $Y_{\kappa_\nu}$ satisfies 
\[
{\cal F}Y_{\kappa_\nu}(\xi)={\cal F}Y_1\left(\frac{\xi}{\kappa_\nu}\right)=\frac{\vert\xi\vert}{2\pi\kappa_\nu}\arctan\frac{\vert\xi\vert}{\kappa_\nu}\, .
\]
The numerical implementation is detailed in Algorithm \ref{algo:2}.
\begin{algorithm}\caption{To solve \eqref{convol} with $S_\nu =  a_\nu J_\nu+(1-a_\nu)B_\nu(T)$ }\label{algo:2}
\textbf{for} each $\nu>0$,
\begin{enumerate}
\item Compute $\vx\mapsto \tilde S^E_\nu(\vx)$ by \eqref{SE} and ${\cal F}Y_{\kappa_\nu}=\frac{\kappa_\nu}{2\pi\vert\xi\vert}\arctan(\kappa_\nu\vert\xi\vert)$. 
\item \textbf{for} n=0,1,..,N
\begin{enumerate}
\item Compute the Fourier transforms  ${\cal F}\tilde S_\nu$ .
\item Compute $Y_{\kappa_\nu}\star S_{\nu} = {\cal F}^{-1}({\cal F}Y_{\kappa_\nu}\cdot {\cal F}\tilde S_{\nu})$.
\item Set $J^{n+1}_\nu(\vx) = S^E_\nu(\vx) + {\cal F}^{-1}({\cal F}Y_{\kappa_\nu}\cdot {\cal F}S_{\nu})$.
\item Compute $T^{n+1}$  solution of 
$$\int_0^\infty\kappa_\nu(1-a_\nu)B_\nu(T)d\nu=\int_0^\infty\kappa_\nu(1-a_\nu)J_\nu^{n+1}d\nu$$.
\end{enumerate}
\end{enumerate}
\end{algorithm}

\subsection{A Bidimensional Example}
\begin{figure}[hbt]
\centering
\includegraphics[width=8cm]{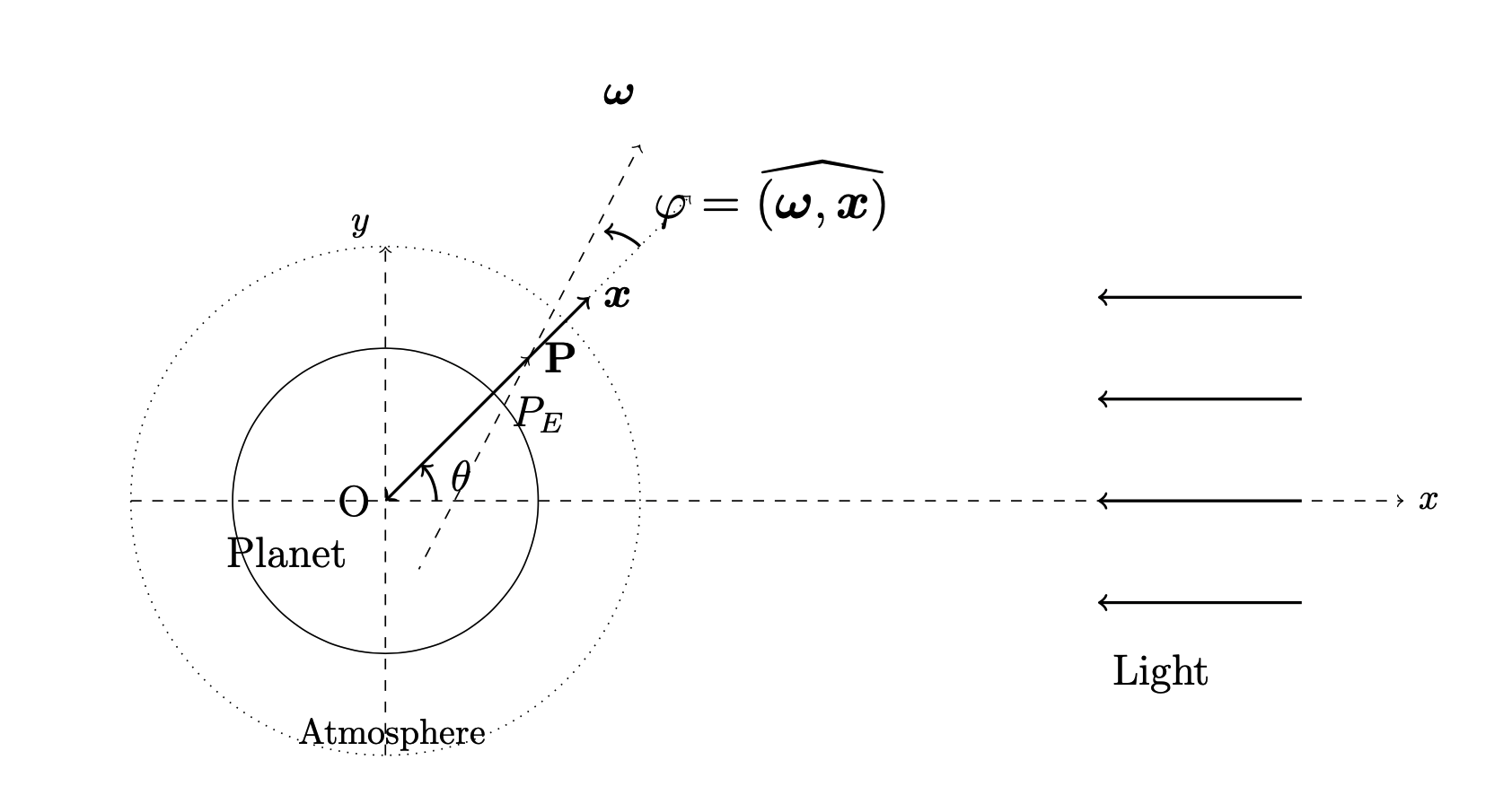}
\caption{\label{fig:tikz}{\it The light source, in the far right, sends lightwaves to the planet; it is assumed that the light is unaffected by the atmosphere.  Hence  point P in the atmosphere receives only the radiations emitted by the planet . A cross section of the planet and its atmosphere is shown in the plane defined by the axis $Ox$ and the point $P$. The light intensity in the direction $\vom$ is a function of the light intensity at $P_E$, the intersection of $\vom$ and the circle $\vert \vx\vert =R$. }}
\label{figone}
\end{figure}

The geometry of the problem is  shown on Figure \ref{fig:tikz} and the data are:
\[
\Omega=\{ x:~\vert x\vert \in(R,R+H)\},\quad
Q_\nu(\vx_E,\vom)=Q^0 B_\nu(T_s)\frac{x_E^+}R.
\]
These data are used with $\vx_E=(x_E,y_E)$, the intersection of the line $\{\vx-t\vom\}_{t>0}$ with the circle $\{\vx:\vert \vx\vert =R\}$.
As $\vert \vx-t\vom\vert =R$ requires $t^2-2t\vx\cdot\vom +\vert \vx\vert ^2-R^2=0$, we have  
\[
\tau_{\vx,\vom}=\vx\cdot\vom -\sqrt{(\vx\cdot\vom )^2-\vert \vx\vert ^2+R^2}.
\]

As explained in \cite{CBOP}, for numerical convenience the problem can be rescaled so that the Planck function is $B_\nu(T)=\nu^3/(\e^{\frac\nu T}-1)$ with $T$ in Kelvin divided by $4780$. The Stefan Boltzmann formula becomes $\int_0^\infty B_\nu(T)\dd\nu =\sigma T^4 $ with $\sigma=\frac{\pi^4}{15}$.
All  cases are without scattering $a=0$. 

In the numerical tests $Q^0=5.74\cdot 10^{-5}$, $T_{sun}=1.209$, $R=0.4$, $H=0.3$.

\subsubsection{The Grey Case}

In the grey case ($\kappa_\nu$ independent of $\nu$), the upper bar, as in  $\bar J$, denotes the mean in $\nu$.  
Then it is easy to see that we need to solve iteratively the integral equation:
\begin{equation}\label{getSeconst}
\bar J(\vx)=S^E(\vx) +  \sigma Y_\kappa\star {\tilde T}^4,~ S^E(\vx)=\frac{Q^0\sigma T_s^4}{2\pi}
\int_0^{2\pi}(x- \tau_{\vx,\vom}\cos\theta)^+ 
\e^{-\kappa \tau_{\vx,\vom}}d\theta
\end{equation}
with $Y_\kappa=\ds\frac\kappa{2\pi\vert \vx\vert }\e^{-\kappa\vert \vx\vert }$.
In absence of thermal diffusion, the temperature field is given by
\begin{equation}\label{TJ}
\kappa \sigma T^4(\vx) = \kappa \bar J(\vx),~~\vx\in\Omega.
\end{equation}
Figure \ref{fig:2} shows a numerical result obtained with $\kappa=0.5$, $N=10$ iterations, starting from $T^0=0.01$.  The monotone behaviour of $\bar J^n$ is clearly seen (but not displayed here).

\begin{figure}[hbt]
\begin{minipage} [b]{0.45\textwidth}
\centering
\includegraphics[width=6.5cm]{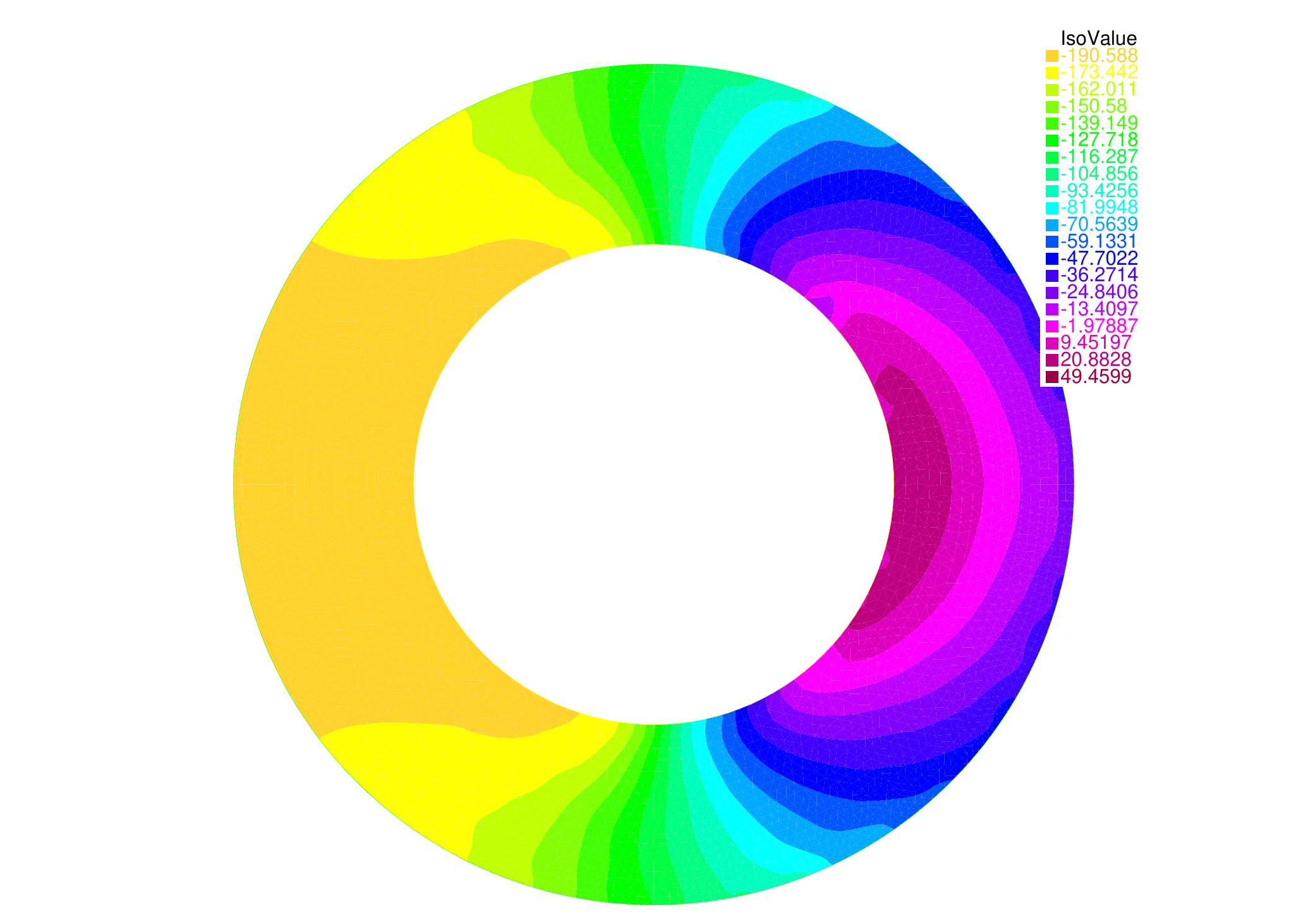}
\caption{\label{fig:2} Temperature map in the atmosphere of the planet which receives light from the right.  }
\end{minipage}
\hskip0.5cm
\begin{minipage} [b]{0.45\textwidth}
\centering
\includegraphics[width=6.5cm]{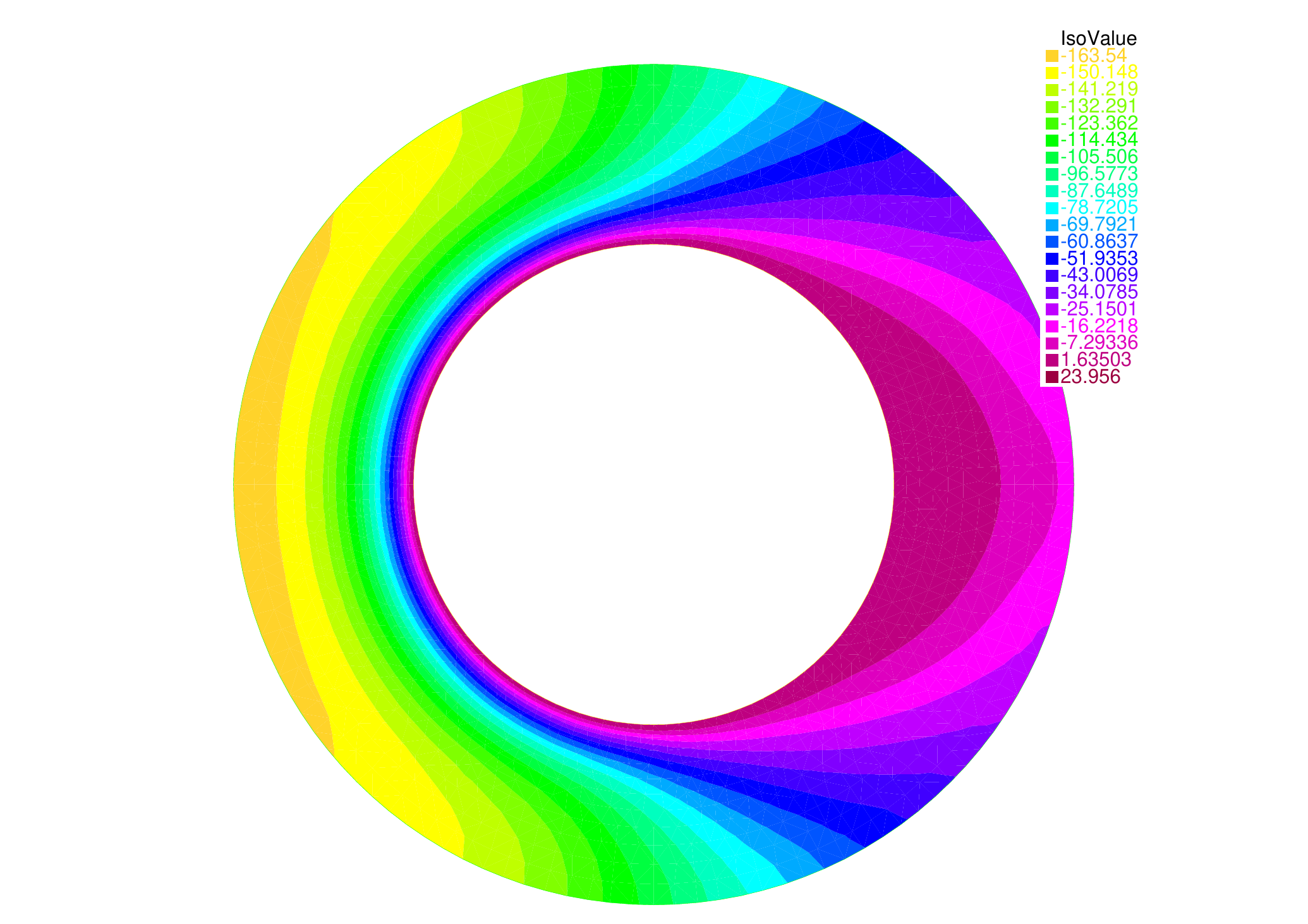}
\caption{\label{fig:3} Temperature map in the atmosphere of the planet which receives light from the right and has thermal diffusion. }
\end{minipage}
\end{figure}

The grid used for the FFT is $64\times 64$. The mesh for the ring is  $36\times 120$ approximately uniform in polar coordinates. For $S^E$ there are 60 integration points in $\theta$.   The computing time is 1 second per iteration on a core i9 MacBook 2020;  convergence is reached after 5 iterations.

\subsubsection{Non-Zero Thermal Diffusion}

Let $\kappa_T$ be the thermal diffusion and let $T_E$ be the temperature of the planet.  Then 
\eqref{TJ} must be replaced by
\begin{equation}\label{TJdiff}
-\kappa_T \Delta T + \sigma T^4(\vx) = \bar J(\vx),~~\vx\in\Omega, \quad T_{\partial\Omega}=T_E.
\end{equation}
It is discretized with triangular finite elements of degree 1 and solved iteratively by a fixed point method whereby $T^4$ is replaced by $T_m^3 T_{m+1}$.
Figure \ref{fig:3} shows a result with the same data used for Figure \ref{fig:2} and $\kappa_T=0.01\sigma$. The temperature on the planet is fixed at 0.06, i.e. 13.8 C$^o$.

\subsubsection{The Frequency Dependent Case}

When $\kappa_\nu$ is not constant the problem is numerically expensive because one Fourier transform is needed at each integration point in the integrals in $\nu$. 

Recall that, when $a_\nu=0$, we have to solve
\begin{equation}\label{BTJ}
\int_0^\infty \kappa_\nu B_\nu(T(\vx))d\nu 
 =\int_0^\infty \kappa_\nu  Y_{\kappa_\nu}\star \tilde B_\nu(T) d \nu + \bar S^E(\vx)
\end{equation}
with
\begin{equation}\label{getSe}
\bar S^E(\vx)= \frac{Q^0}{2\pi}\int_0^{2\pi}\left((x- \tau_{\vx,\vom}\cos\theta)^+
\int_0^\infty B_\nu(T_s)\kappa_\nu\e^{-\kappa_\nu \tau_{\vx,\vom}}d\nu\right)d\theta
\end{equation}

Extracting $\vx\mapsto T(\vx)$ from \eqref{BTJ},  with a known right hand side, with a $\nu\mapsto \kappa_\nu$ given by values, is doable but expensive (see \cite{FGOPSinum}).  For a simple numerical example we may expand $\kappa_\nu$ in powers of $\nu$:
\begin{equation}
\begin{aligned}&
\kappa_\nu \approx \kappa_0 + \kappa_1\nu + \kappa_2\nu^2+ \kappa_3\nu^3 + \kappa_4\nu^4+\dots
 \implies~\int_0^\infty \kappa_\nu B_\nu(T) = \sigma\kappa_0 T^4 
\\&
\hskip1cm + 24.886\kappa_1 T^5 + 122.081\kappa_2 T^6+ 726.012\kappa_3 T^7 + 5060.55 \kappa_4 T^8+\dots
\end{aligned}
\end{equation}
These numerical values are evaluations of polynomials of $\pi$ and $\zeta$ function numbers computed with Maple.

For the numerical test we chose $\kappa_\nu = \kappa_0 + \kappa_1\nu:= 0.5\pm 0.03\nu$, $\nu\in(0,15)$. Then  we have to solve iteratively
\begin{equation}\label{treize}
\begin{aligned}&
\sigma\kappa_0 T^4(\vx) +24.886\kappa_1 T^5(\vx) 
=  Y_1\star {\cal B_\nu}\vert_\vx + S^E(\vx), \quad \vx\in\Omega
\end{aligned}
\end{equation}
\begin{figure}[hbt]
\begin{minipage} [b]{0.45\textwidth}
\centering
\includegraphics[width=6.5cm]{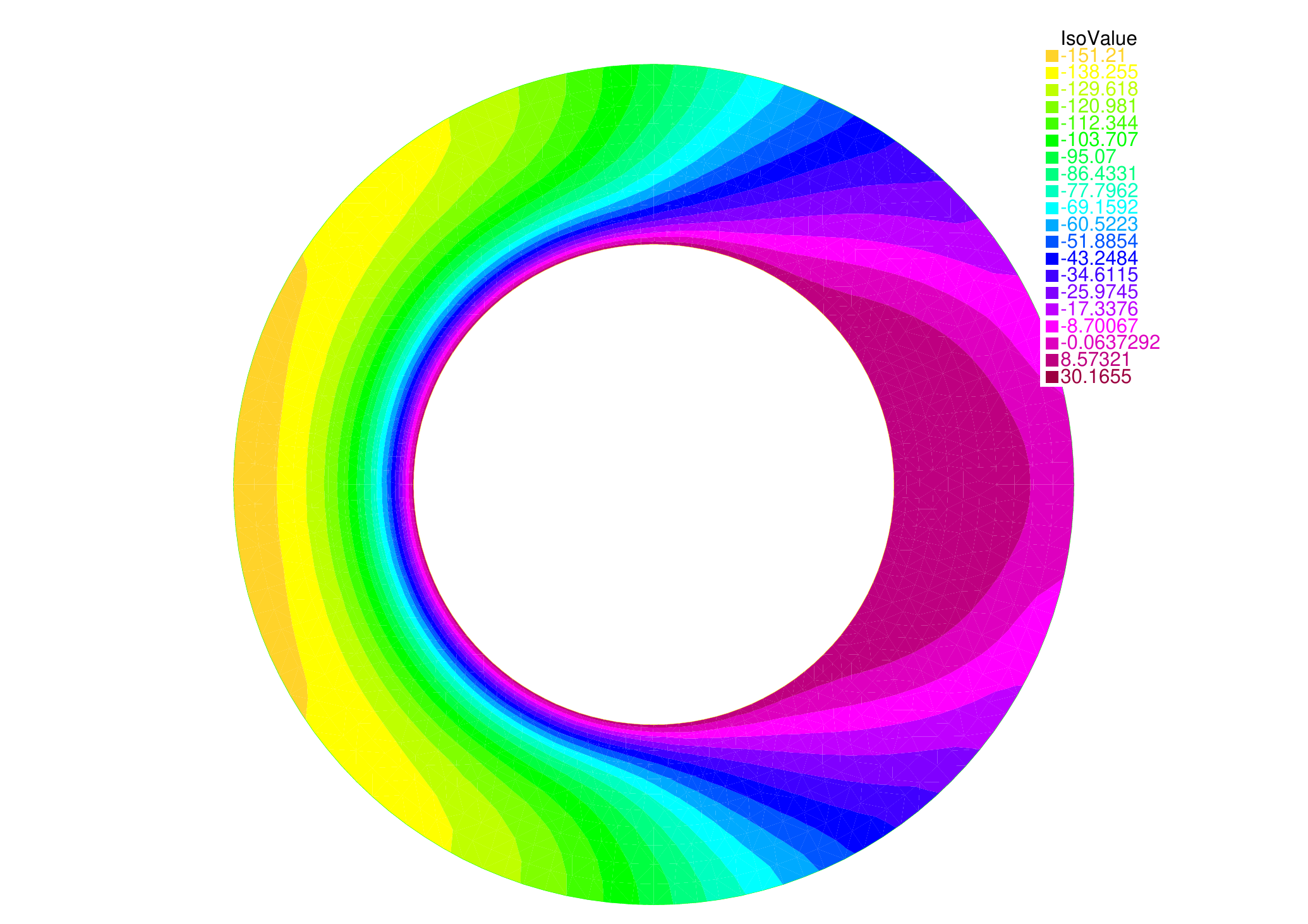}
\caption{\label{fig:4}{\it Same as above but with $\kappa=0.5+0.03\nu$, $\nu\in(0.01,15)$. }}
\end{minipage}
\hskip0.5cm
\begin{minipage} [b]{0.45\textwidth}
\centering
\includegraphics[width=6.5cm]{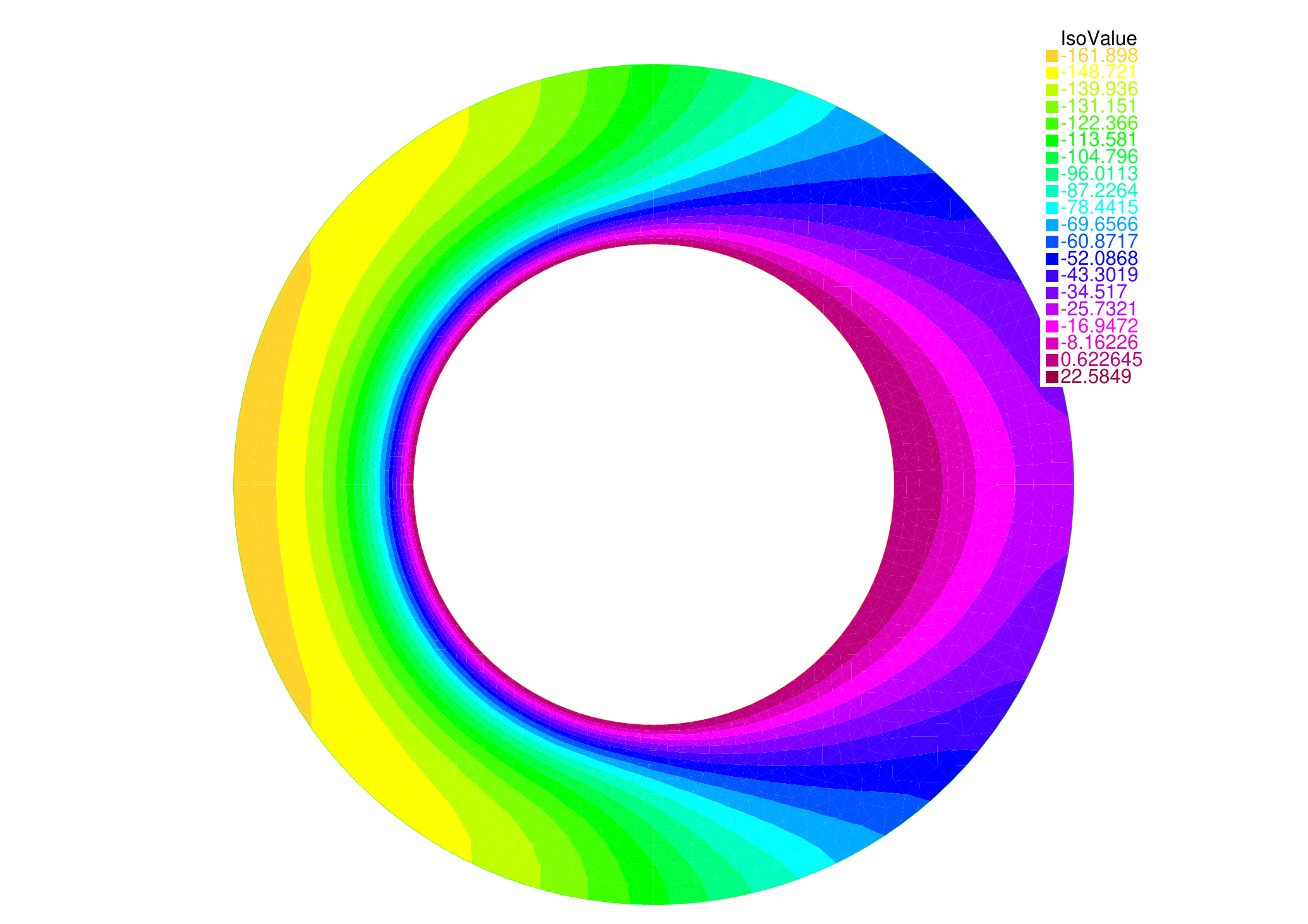}
\caption{\label{fig:5}{\it Same as above but with $\kappa=0.5-0.03\nu$, $\nu\in(0.01,15)$. }}
\end{minipage}
\end{figure}
Figures \ref{fig:3}, \ref{fig:4} and \ref{fig:5} illustrate the influence of a varying $\kappa_\nu$ on the temperature.  There were 60 points for the integrations in $\nu$, 60 points for the integrations in $\theta$ and $64\times 64$ for the Discrete Fourier Transforms.

All programs were written with the high level PDE solver \texttt{freefem++}  (see \cite{FF}).   The program for a non constant $\kappa$ is evidently much slower and took 580 seconds per case.
 
\section{Conclusion}
By using a technique developed in \cite{FGOPSinum} for the stratified radiative transfer problem, we have proved existence, uniqueness and a maximum principle for a  rather general form of the multidimensional Radiative Transfer system coupled with the time dependent temperature equation with drift. 

The proofs are constructive and  yield a robust and fairly fast  numerical numerical algorithm, at least in the grey case,  which encapsulate the exact solution between a lower and a higher numerical one obtained by starting from a guessed temperature field below (resp. above) the exact temperature field.

The 5 dimensional PDE is thus replaced by an iteration involving a three dimensional integral and a convolution integral easily computed with an FFT and which constitutes a tremendous gain in computing time over more classical finite element discretization as in \cite{HechtJCP}.

Most remarkable is that there are essentially no constraint, besides positivity, on the absorption $\kappa_\nu$ and the scattering $a_\nu$.  If these depend on $\vx$, a change of variable needs to be applied to return to the case $\kappa_\nu$ independent of $\vx$.  However if $\kappa_\nu$ depends on $T$ the method does not work, except by adding an iteration loop, sending this dependency on the right hand side of the equation of $I_\nu$.

\backmatter

\bmhead{Acknowledgments}
The authors would like to thank Prof. Claude Bardos for the numerous discussions and references given.

\bibliography{sn-bibliography}
\pagebreak
\definecolor{mGreen}{rgb}{0,0.6,0}
\definecolor{mGray}{rgb}{0.5,0.5,0.5}
\definecolor{mPurple}{rgb}{0.58,0,0.82}
\definecolor{backgroundColour}{rgb}{0.95,0.95,0.92}%

\lstdefinestyle{Cstyle}{
    backgroundcolor=\color{backgroundColour},   
    commentstyle=\color{mGreen},
    keywordstyle=\color{magenta},
    numberstyle=\tiny\color{mGray},
    stringstyle=\color{mPurple},
    basicstyle=\tiny,
    breakatwhitespace=false,         
    breaklines=true,                 
    captionpos=b,                    
    keepspaces=true,                 
    numbers=left,                    
    numbersep=5pt,                  
    showspaces=false,                
    showstringspaces=false,
    showtabs=false,                  
    tabsize=2,
    language=C
}

\section{Appendix: Code documentation}
The following may not appear in the published paper.

The following  \texttt{freefem++} script \texttt{RT2Dfull2.edp} works for $\kappa_\nu=\kappa_0+\kappa_1\nu$, $\nu\in(\nu_{min},\nu_{max})$.  It recognizes the case $\kappa$ constant (i.e. $\kappa_1=0$) and by avoiding the integrals in $\nu$ is then much faster in that case.

The data are:

 \begin{lstlisting}[style=CStyle]
 
// RT2Dfull2.edp. radiative transfer with no approximation
load "dfft"

int n=1, Niter=5,
	nx=n*32,ny=n*32,NN=nx*ny;
real Q0=2*sqrt(2.)*2.03e-5, Tsun=1.209, sigma=(pi^4)/15;

mesh Th=square(nx-1,ny-1,[-1+2*(nx-1)*x/nx,-1+2.*(ny-1)*y/ny]);
// warning  the numbering of  vertices (x,y) i  i = x/nx + nx*y/ny 

real R= 0.4, H=0.3;
real dtheta=pi/30; // controls the integration on the unit circle
real kappa0=0.5, kappa1= 0., 
	 kappaT=0.01; // if zero no heat equation
real numin= 0.01, numax = 15, dnu = (numax-numin)/100;

real source = Q0, R2=R*R; // auxiliaries
 \end{lstlisting}

We need two domains, the square for the dFFT and the ring for the physics:

\begin{lstlisting}[style=CStyle]

mesh Th=square(nx-1,ny-1,[-1+2*(nx-1)*x/nx,-1+2.*(ny-1)*y/ny]);
// warning  the numbering of  vertices (x,y) i  i = x/nx + nx*y/ny 
border R1(t=0,2*pi){x=R*cos(t); y=R*sin(t);}
border RH(t=0,2*pi){x=(R+H)*cos(t); y=(R+H)*sin(t);}
mesh Rh= buildmesh(RH(120)+R1(-120));
 \end{lstlisting}

The finite element spaces and the functions are defined by

\begin{lstlisting}[style=CStyle]

fespace Vh(Th,P1); 
fespace Wh(Rh,P1);

Vh<complex> u,v,w, F; // u,v,w for FFT and JJ for J(x,y)
Vh Jsource; // for S^E(x,y)
Wh Tc=0.01, Tch, Tca; // Tc is T(x,y) and Tch and Tca are auxiliaries
 \end{lstlisting}
We need a function to define $\kappa_\nu$, one to define the Planck function $B_\nu(T)$ and one to compute $\tau_{\vx,\vom}$. The parameter \texttt{scal} in \texttt{twx} is here to save time and prevent recomputing the scalar product in \texttt{getSe}.

\begin{lstlisting}[style=CStyle]

func real kappa(real nu) {return kappa0+kappa1*nu;} 
func real Bnu(real T,real nu){ return sqr(nu)*nu/(exp(nu/T)-1);}
func real txw(real X,real Y, real scal){
	real aux =  sqr(scal) + R2 -sqr(X)-sqr(Y);
	if(aux>=0) 
		if(scal>0) return scal - sqrt(aux);
			else return scal + sqrt(aux);
	else return -1;
}
 \end{lstlisting}

Now \texttt{getSe} implements \eqref{getSe} or \eqref{getSeconst} when applicable.

\begin{lstlisting}[style=CStyle]

func real getSe(real X, real Y){
	real aux = X*X+Y*Y;
	real Jxy=0;
	if(aux>R2 && aux<(R+H)*(R+H))
	  for(real theta=0; theta<2*pi;theta+=dtheta){
		real wx=cos(theta),wy=sin(theta);
		real scal  = X*wx+Y*wy;
		real t = txw(X,Y,scal);
		real Bke = 0;
		if(t>0) {
			if(kappa1==0) Bke = kappa0*exp(-t*kappa0)*sigma*Tsun^4;
			else for(real nu=numin;nu<numax;nu+=dnu)
					Bke += Bnu(Tsun,nu)*kappa(nu)*exp(-kappa(nu)*t)*dnu;
			Jxy=Jxy+dtheta*max(X-t*wx,0.)*Bke;
		}
	 }
	return Jxy;
}
Jsource = getSe(x,y)*source/(2*pi);
 \end{lstlisting}
 
 Now let us compute $Y_1$ and its Fourier transform \texttt{v}. We could have use its analytical values but then would have had to struggle with the correspondance between the Fourier modes and the grid points. To avoid the singularity at $\vx=0$  we truncate it at $\vert\vx\vert>R/4$. The FEM function $u$ is needed to build an array of values at the grid points.

\begin{lstlisting}[style=CStyle]

func real Yxy(real X,real Y,real kappa){
	real aux = sqrt(X*X+Y*Y);
	if( aux>R/4 ) return kappa*exp(-aux*kappa)/(2*pi*aux);
	else return 0.;
}
 \end{lstlisting}
 
 The computation of the right hand side of \eqref{treize} is done as follows
 
\begin{lstlisting}[style=CStyle]
	F=0;
	if(kappa1==0){
		 if(niter==0){u = Yxy(x,y,kappa0); v[]=dfft(u[],ny,-1);}
		 u = kappa0*sigma*Tc^4;
		 w[]=dfft(u[],ny,-1);
		 F=v*w*kappa0/sqr(NN);		 
	} else
	for(real nu=numin;nu<numax;nu+=dnu){
		u= Yxy(x,y,kappa(nu));
		v[]=dfft(u[],ny,-1);
		u = Bnu(Tc(x,y),nu);
		w[]=dfft(u[],ny,-1);
		u=v*w/sqr(NN);
		F=F+u*kappa(nu)*dnu;	
	}
	u[]=dfft(F[],ny,1);
	u= u + Jsource;
 \end{lstlisting}
 
 Finally the temperature is computed and converted into Celsius degree by the last formula.
 
\begin{lstlisting}[style=CStyle]
	Tca=sqrt(sqrt(real(u) / (sigma*kappa0 + 24.886*kappa1*Tc) ));
	heat; 
	u = sqr(Tc*Tc)*(sigma*kappa0 + 24.886*kappa1*Tc); // mean light intensity
	Tca=Tc*4780-273; // temperature in Celcius
	plot(Tca,ps="planettempdifffull2.ps", value=1,fill=1);
 \end{lstlisting}
 
 where \texttt{heat} is finite element solver for the temperature equation implemented as
 (notice how $T$ is prescribed on the planet at 0.06, which is $13.8$ Celsius.
\begin{lstlisting}[style=CStyle]
problem   heat(Tc,Tch) = int2d(Rh)(kappaT*(dx(Tc)*dx(Tch)
			+dy(Tc)*dy(Tch)) +Tc*Tch) - int2d(Rh)(Tca*Tch)+on(R1,Tc=0.06);
 \end{lstlisting}
 
 These next to last 2 blocks must be encapsulated into a iteration loop
\begin{lstlisting}[style=CStyle]

for(int niter=0;niter<Niter; niter++){
// the blocks here
}
 \end{lstlisting}

\end{document}